\title{\textbf{Ruin probability in a risk model with \\a variable premium intensity and risky investments}}
\author{
        Yuliya~Mishura \thanks{Department of Probability Theory, Statistics and
                               Actuarial Mathematics, Taras Shevchenko National University
                               of Kyiv, 64 Volodymyrska, 01601 Kyiv, Ukraine,
                               e-mail: myus@univ.kiev.ua}
        \and
        Mykola~Perestyuk \thanks{Department of Integral and Differential
                                 Equations, Taras Shevchenko National University
                                 of Kyiv, 64 Volodymyrska, 01601 Kyiv, Ukraine,
                                 e-mail: pmo@univ.kiev.ua}
        \and
        Olena~Ragulina \thanks{Department of Probability Theory, Statistics and
                               Actuarial Mathematics, Taras Shevchenko National University
                               of Kyiv, 64 Volodymyrska, 01601 Kyiv, Ukraine,
                               e-mail: lena\_ragulina@mail.ru}
       }
\date{\today}
\theoremstyle{plain}
\newtheorem{theorem}{Theorem}
\newtheorem{proposition}{Proposition}
\newtheorem{lemma}{Lemma}
\theoremstyle{remark}
\newtheorem{example}{Example}
\newtheorem{remark}{Remark}
\begin{document}
\maketitle

\begin{abstract}
We consider a generalization of the classical risk model when
the premium intensity depends on the current surplus of an insurance company.
All surplus is invested in the risky asset, the price of which follows
a geometric Brownian motion. We get an exponential bound for
the infinite-horizon ruin probability. To this end, we allow
the surplus process to explode and investigate the question
concerning the probability of explosion of the surplus process
between claim arrivals.
\end{abstract}

\begin{quote}
\textbf{Keywords:} Risk process, infinite-horizon ruin probability, variable premium intensity,
                   risky investments, exponential bound, stochastic differential equation,
                   explosion time, existence and uniqueness theorem, supermartingale property. \\

\textbf{AMS MSC 2010:} Primary 91B30, Secondary 60H10, 60G46
\end{quote}

\section{Introduction}\label{se:1}

Since Lundberg introduced the collective risk model in 1903,
the estimation of the ruin probability has been one of the central directions
for investigations in risk theory.
It is well known that in the Cram\'er-Lundberg model, which is also
called the classical risk model, the infinite-horizon ruin probability
decreases exponentially with the initial surplus if the claim sizes have exponential
moments and the net profit condition holds.
Results concerning bounds and asymptotics for the ruin probability were also obtained
for different generalizations of the classical risk model under various assumptions
(see, e.g.,~\cite{no2, no7, no20} and the references given there).

Risk models that allow the insurance company to invest are of great interest.
The fact that risky investments can be dangerous was first justified mathematically
by Kalashnikov and Norberg~\cite{no12}. They modelled the basic surplus process
due to insurance activity and the price of the risky asset by L\'evy processes
and obtained upper and lower power bounds for the ruin probability
when the initial surplus is large enough.
Later, Paulsen~\cite{no18} and Yuen, Wang, Wu~\cite{no22} considered some
generalizations of these results.

Frolova, Kabanov and Pergamenshchikov~\cite{no5} used the bounds obtained in~\cite{no12}
to show that the ruin occurs with probability~1 in the classical risk model
if all surplus is invested in the risky asset, the price of which is modelled
by a geometric Brownian motion, and some additional conditions for parameters
of the geometric Brownian motion hold. They also showed that if these conditions
are not fulfilled, a power asymptotic is true for the ruin probability
when the claim sizes are exponentially distributed.
The power asymptotic was got by Cai and Xu~\cite{no4} in the case where the classical
risk process is perturbed by a Brownian motion.
Moreover, Pergamenshchikov and Zeitouny~\cite{no19} considered the risk model where
the premium intensity is a bounded nonnegative random function and
generalized results of~\cite{no5}.

On the other hand, numerous results indicate that risky investments can be used
to improve the solvency of the insurance company. For example,
Gaier, Grandits and Schachermayer~\cite{no6} considered the classical risk model
under the additional assumptions that the company is allowed to borrow and invest
in the risky asset, the price of which follows a geometric Brownian motion.
They obtained an upper exponential bound for the ruin probability when
the claim sizes have exponential moments and a fixed quantity, which is independent
of the current surplus, is invested in the risky asset. It appears that this bound
is better then the classical one. For an exponential bound in a model
with risky investments see also, for instance,~\cite{no16}.

Numerous investigations are devoted to solving optimal investment problems
from viewpoint of the infinite-horizon ruin probability minimization.
For instance, Hipp and Plum~\cite{no9}, Liu and Yang~\cite{no15},
Azcue and Muler~\cite{no3} considered the optimal investment problem
in the classical risk model when the company is allowed to borrow.
Asymptotics for the ruin probability under optimal strategies
were obtained by Hipp and Schmidli~\cite{no10}, Grandits~\cite{no8},
Schmidli~\cite{no21} for different assumptions about claim sizes.

We consider a generalization of the classical risk model when the premium intensity
depends on the current surplus of the insurance company, which is invested in the risky asset.
Our main aim is to show that if the premium intensity grows rapidly with
increasing surplus, then an exponential bound for the ruin probability
holds under certain conditions in spite of the fact that all surplus
is invested in the risky asset.
To this end, we allow the surplus process to explode.
To be more precise we let the premium intensity be a quadratic function.
In addition, we investigate the question concerning the probability of explosion
of the surplus process between claim arrivals in detail.

Let \((\Omega, \mathfrak{F}, \mathbb{P})\) be a probability space satisfying
the usual conditions and all the objects be defined on it.
We assume that the insurance company has a nonnegative initial surplus \(x\)
and denote by \(X_t (x)\) its surplus at time \(t\ge 0\). For simplicity of notation,
we write \(X_t\) instead of \(X_t (x)\) when no confusion can arise.
Let \(c\colon \mathbb{R_+} \to \mathbb{R_+} \!\setminus\! \{0\}\) be
a measurable function such that \(c(u)= c(0)\) for all \(u<0\)
and \(c(X_t)\) be a premium intensity that depends on the surplus at time \(t\).

Next, we suppose that the claim sizes form a sequence \((Y_i)_{i\ge 1}\) of
nonnegative i.i.d. random variables with finite expectations \(\mu\).
We denote by \(\tau_i\) the time when the \(i\)th claim arrives. For convenience we set \(\tau_0 =0\).

Let \(h\colon \mathbb{R_+} \to \mathbb{R_+}\) be the shifted moment generating function
of \(Y_i\) such that \(h(0)=0\), i.e.
\[
h(r)=\mathbb{E} e^{rY_i} -1.
\]
We make the following classical assumption concerning \(h(r)\):
there exists \(r_{\infty} \in (0,+\infty]\) such that \(h(r)< +\infty\) for all \(r \in [0,r_{+\infty})\)
and \( \lim_{r\uparrow r_{\infty}} h(r)= +\infty\) (see \cite[p.~2]{no7}).
It is easily seen that \(h(r)\) is increasing, concave, and continuous on \([0,r_{+\infty})\).

The number of claims on the time interval \([0,t]\) is a Poisson process \((N_t)_{t\ge 0}\)
with constant intensity \(\lambda >0\).
Thus, the total claims on \([0,t]\) equal \(\sum_{i=1}^{N_t} Y_i\).
We set \(\sum_{i=1}^{0} Y_i =0\) if \(N_t =0\).

In addition, we assume that all surplus is invested in the risky asset, the price of which
equals \(S_t\) at time \(t\). We model the process \((S_t)_{t\ge 0}\) by a geometric
Brownian motion. Thus,
\begin{equation}
\label{eq:1.1}
dS_t= S_t (a\,dt +b\,dW_t),
\end{equation}
where \(a>0\), \(b>0\), and \((W_t)_{t\ge 0}\) is a standard Brownian motion.
We suppose that the random variables \((Y_i)_{i\ge 1}\) and the processes \((N_t)_{t\ge 0}\)
and \((W_t)_{t\ge 0}\) are independent.

Let \((\mathfrak{F}_t)_{t\ge 0}\) be a filtration generated by \((Y_i)_{i\ge 1}\), \((N_t)_{t\ge 0}\),
and \((W_t)_{t\ge 0}\), i.e.
\[
\mathfrak{F}_t= \sigma \bigl((N_s)_{0\le s\le t}, (W_s)_{0\le s\le t}, Y_1, Y_2, \dots, Y_{N_t}\bigr).
\]

Under the above assumptions, the surplus process \((X_t)_{t\ge 0}\) follows the equation
\begin{equation}
\label{eq:1.2}
X_t= x+ \int_0^t c(X_s)\,ds+ \int_0^t \frac{X_s}{S_s} \,dS_s- \sum_{i=1}^{N_t} Y_i, \quad t\ge 0.
\end{equation}

Substituting~\eqref{eq:1.1} into~\eqref{eq:1.2} yields
\begin{equation}
\label{eq:1.3}
X_t= x+ \int_0^t c(X_s)\,ds+ a\int_0^t X_s \,ds+ b\int_0^t X_s \,dW_s- \sum_{i=1}^{N_t} Y_i, \quad t\ge 0.
\end{equation}

The ruin time is defined as \(\tau(x)= \inf\{t\ge0\colon X_t(x) <0\}\).
We suppose that \(\tau(x) =\infty\) if \(X_t (x) \ge 0\) for all \(t\ge 0\).
To simplify notation, we let \(\tau\) stand for \(\tau(x)\).
The corresponding infinite-horizon ruin probability is given by
\(\psi(x)= \mathbb{P} \bigl[ \inf_{t\ge0} X_t(x) <0 \bigr]\), which is equivalent to
\(\psi(x)= \mathbb{P} [\tau(x)< \infty]\).

The rest of the paper is organized in the following way.
Section~\ref{se:2} deals with the detailed investigation of the question
concerning the probability of explosion of the risk process between claim arrivals.
In Section~\ref{se:3} we formulate and prove the existence and uniqueness theorem
for stochastic differential equation that describes the surplus process.
In Section~\ref{se:4} we establish the supermartingale property
for an auxiliary exponential process. This property allows us to get an exponential bound
for the ruin probability under certain conditions.
Finally, in Section~\ref{se:5} we consider the case where the premium intensity is
a quadratic function and obtain an exponential bound for the ruin probability.
In addition, Appendix~\ref{se:A} gives two lemmas, which are used in Section~\ref{se:2}.

\section{Auxiliary results}\label{se:2}

Consider now the following stochastic differential equation
\begin{equation}
\label{eq:2.1}
X_t= x+ \int_0^t p(X_s)\,ds+ b\int_0^t X_s\,dW_s, \quad t\ge0,
\end{equation}
where \(x>0\), \(b>0\), \((W_t)_{t\ge0}\) is a standard Brownian motion,
\(p\colon \mathbb{R} \to \mathbb{R_+}\)
is a locally Lipschitz continuous function such that \(p(u)\)
is strictly increasing on \(\mathbb{R_+}\) and \(p(u)=p(0)\) for all \(u<0\).

Equation~\eqref{eq:2.1} describes the surplus process between two successive jumps
of \((N_t)_{t\ge 0}\) in the model considered above provided that one
puts the corresponding restrictions on \(c(u)\), sets \(p(u)=c(u)+au\), and
takes the surplus at time when the last jump of \((N_t)_{t\ge 0}\) occurs instead of \(x\).

First, we give some results which show that \((X_t)_{t\ge0}\) goes to \(+\infty\)
either with probability 1 or with positive probability, which is less then 1 under certain conditions.

Let \(t^*\) be a possible explosion time of \((X_t)_{t\ge0}\), i.e. \(t^*= \inf\{t\ge0\colon X_t=\infty\}\).
Moreover, we denote by \(t_{(0,+\infty)}^*\) the first exit time from \((0,+\infty)\) for \((X_t)_{t\ge0}\), i.e.
\(t_{(0,+\infty)}^*= \inf\{t\ge0\colon X_t\notin (0,+\infty)\}\).
By Theorem~3.1 in~\cite[p.~178--179]{no11}, equation~\eqref{eq:2.1} has a unique strong solution up to
the explosion time \(t^*\). Note that here and subsequently,
we imply the pathwise uniqueness of solutions only.

For \(x>0\), we define
\begin{equation}
\label{eq:2.2}
I_1= \int_x^{+\infty} \exp\left\{ -\frac{2}{b^2} \int_x^v \frac{p(u)}{u^2}\,du \right\} dv
\quad\text{and}\quad
I_2= -\int_0^x \exp\left\{ \frac{2}{b^2} \int_v^x \frac{p(u)}{u^2}\,du \right\} dv.
\end{equation}

\begin{proposition}
\label{pr:2.1}
If \(p(0)>0\) and condition~\eqref{eq:A.1} holds, then
\[
\mathbb{P} \bigl[ \lim\nolimits_{t\to t_{(0,+\infty)}^*} X_t= +\infty \bigr] =1.
\]
\end{proposition}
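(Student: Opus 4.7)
The plan is to reduce the statement to the classical boundary classification of a one-dimensional diffusion via its scale function. First, observe that the integrands in (2.2) are precisely the scale density of equation (2.1): the scale function
\begin{equation*}
s(y) = \int_x^y \exp\left\{-\frac{2}{b^{2}}\int_x^v \frac{p(u)}{u^{2}}\,du\right\}dv, \qquad y\in(0,+\infty),
\end{equation*}
satisfies $s(+\infty)=I_1$ and $s(0+)=I_2$. Applying Itô's formula to $s(X_{t\wedge\tau_n})$, where $\tau_n=\inf\{t\ge 0\colon X_t\notin(1/n,n)\}$ and $1/n<x<n$, and using the ODE $\tfrac12 b^{2}u^{2}s''(u)+p(u)s'(u)=0$ characterizing $s$, I would conclude that $s(X_{t\wedge\tau_n})$ is a continuous martingale bounded between $s(1/n)$ and $s(n)$.

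Second, the optional stopping theorem then yields
\begin{equation*}
\mathbb{P}\bigl[X_{\tau_n}=1/n\bigr]=\frac{s(n)-s(x)}{s(n)-s(1/n)}.
\end{equation*}
Under condition (A.1)---which by the lemmas of the appendix should encode $I_2=-\infty$, i.e., $s(0+)=-\infty$---this probability tends to $0$ as $n\to\infty$. A Borel--Cantelli-type argument on the nested events $\{X_{\tau_n}=1/n\}$ then gives that, almost surely, $X_{\tau_n}=n$ for all sufficiently large $n$. The monotone sequence $\tau_n$ has an almost sure limit $\tau_\infty\le t^{*}_{(0,+\infty)}$; path-continuity up to the explosion time, together with the fact that $X$ is bounded away from $0$ along the sequence $(\tau_n)$, forces $\tau_\infty=t^{*}_{(0,+\infty)}$ and $\lim_{t\uparrow t^{*}_{(0,+\infty)}}X_t=+\infty$ almost surely. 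This argument simultaneously covers the explosive case ($t^{*}_{(0,+\infty)}<\infty$, where $X_t$ blows up in finite time) and the non-explosive case ($t^{*}_{(0,+\infty)}=+\infty$, where $X_t\to+\infty$ as $t\to\infty$), which is the reason I prefer working with exit times of expanding subintervals rather than splitting into cases.

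The assumption $p(0)>0$ enters at two points: it ensures regularity of the coefficients so that uniqueness up to explosion from Theorem~3.1 of~\cite{no11} applies, and it guarantees a strictly positive drift near the origin so that, combined with (A.1), the scale integral $I_2$ genuinely diverges. The main obstacle I anticipate is justifying the identification $\tau_\infty=t^{*}_{(0,+\infty)}$: one must exclude the possibility that the paths oscillate within some compact subinterval of $(0,+\infty)$ up to $t^{*}_{(0,+\infty)}$. This is exactly where the divergence $s(0+)=-\infty$ is indispensable, as opposed to mere finiteness, and is the reason why condition (A.1) rather than something weaker is required. The rest of the proof is routine bookkeeping with the martingale convergence theorem applied to the bounded martingales $s(X_{t\wedge\tau_n})$.
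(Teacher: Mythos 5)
Your overall route---classifying the boundary behaviour of the diffusion \eqref{eq:2.1} through its scale function---is in substance exactly what the paper does, except that the paper does not reprove the dichotomy: it merely notes that Lemma~\ref{lem:A.1} gives \(I_1<+\infty\) and Lemma~\ref{lem:A.2} gives \(I_2=-\infty\), and then invokes Theorem~3.1 of Ikeda--Watanabe (p.~447), which is precisely the statement that a diffusion on \((0,+\infty)\) with \(s(+\infty)<\infty\) and \(s(0+)=-\infty\) satisfies \(\mathbb{P}\bigl[\lim_{t\to t_{(0,+\infty)}^*}X_t=+\infty\bigr]=1\). So you are re-deriving the cited theorem rather than taking a different path.

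Two points in your sketch need repair. First, you have the hypotheses playing the wrong roles: condition~\eqref{eq:A.1} is the hypothesis of Lemma~\ref{lem:A.1} and yields \(I_1<+\infty\), i.e.\ \(s(+\infty)<\infty\); it is the assumption \(p(0)>0\) that, via Lemma~\ref{lem:A.2}, forces \(I_2=-\infty\), i.e.\ \(s(0+)=-\infty\) (and it does so on its own, not ``combined with \eqref{eq:A.1}''; nor is \(p(0)>0\) what makes the existence theory work---that is the local Lipschitz assumption on \(p\)). Both facts are needed in your exit-probability formula: \(s(+\infty)<\infty\) keeps the numerator \(s(n)-s(x)\) bounded while \(s(0+)=-\infty\) sends the denominator to \(+\infty\); if \(I_1\) were infinite the ratio need not tend to \(0\) and the conclusion fails. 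Second, the ``Borel--Cantelli-type argument on the nested events \(\{X_{\tau_n}=1/n\}\)'' does not work as stated: these events are not nested (a path may exit \((1/n,n)\) at \(1/n\) and still exit \((1/(n+1),n+1)\) at \(n+1\)), and \(\mathbb{P}[X_{\tau_n}=1/n]\to 0\) alone yields no almost-sure conclusion without summability. The standard fix is to work with the genuinely decreasing events \(A_n=\{X\ \text{hits}\ 1/n\ \text{before}\ t_{(0,+\infty)}^*\}\), whose probabilities equal \(\bigl(s(+\infty)-s(x)\bigr)/\bigl(s(+\infty)-s(1/n)\bigr)\) and tend to \(0\), so that \(\inf_{t<t_{(0,+\infty)}^*}X_t>0\) a.s.; one then combines this with the a.s.\ convergence of the local martingale \(s(X_t)\), which is bounded above by \(I_1\), and an argument excluding limits in the interior of \((0,+\infty)\). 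With those corrections your argument becomes the textbook proof of the result the paper cites.
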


\begin{proof}
Note that in this case \(I_1 < +\infty\) and \(I_2 = -\infty\) by Lemmas~\ref{lem:A.1} and~\ref{lem:A.2}.
Thus, the assertion of the proposition follows immediately from Theorem~3.1 in~\cite[p.~447]{no11}.
\end{proof}

\begin{remark}
\label{rem:2.2}
If \(p(0)=0\), then \(I_2\) may be finite. By Theorem~3.1 in~\cite[p.~447]{no11},
if \(I_1<+\infty\) and \(I_2>-\infty\),
then \( \lim_{t\to t_{(0,+\infty)}^*} X_t\) exists a.s.,
\(0< \mathbb{P} \bigl[ \lim_{t\to t_{(0,+\infty)}^*} X_t= +\infty \bigr] <1 \),
and \(\mathbb{P} \bigl[ \lim_{t\to t_{(0,+\infty)}^*} X_t= 0 \bigr]
=1- \mathbb{P} \bigl[ \lim_{t\to t_{(0,+\infty)}^*} X_t= +\infty \bigr] \).
\end{remark}

\begin{remark}
\label{rem:2.3}
Proposition~\ref{pr:2.1} does not give us whether the exit time \(t_{(0,+\infty)}^*\) is finite.
It is well known that Feller's test for explosions
(see, e.g., Theorem~5.29 in \cite[p.~348]{no13} and~\cite{no14})
gives precise conditions for whether or not a one-dimensional diffusion process
explodes in finite time. This test is very useful when one wants to show that
a diffusion process does not explode in finite time (see, e.g.,~\cite{no17}),
but it does not solve our problem.
\end{remark}

We now give a few examples.

\begin{example}
\label{ex:2.4}
Let
\[
p(u)=
\begin{cases}
p_1 u +p_0 &\text{if} \quad u\ge0, \\
p_0 &\text{if} \quad u<0.
\end{cases}
\]
The function \(p(u)\) has the asserted properties provided that \(p_0 \ge0\) and \(p_1 >0\).

Since
\[
I_1= \int_x^{+\infty} \exp \left\{ -\frac{2}{b^2}
\int_x^v \frac{p_1 u+ p_0}{u^2} \,du \right\} dv
=\int_x^{+\infty} \left( \frac{x}{v} \right)^{2p_1/b^2} \!\!\cdot
\exp \left\{ \frac{2p_0}{b^2} \left( \frac{1}{v}-\frac{1}{x} \right) \right\} dv,
\]
we have \(I_1= +\infty\) for \(2p_1 \le b^2\), and \(I_1< +\infty\) for \(2p_1> b^2\).

We first consider the case \(p_0 >0\).
From Theorem~3.1 in~\cite[p.~447]{no11} and Lemma~\ref{lem:A.2} we conclude that
\(\mathbb{P} \bigl[ t_{(0,+\infty)}^*= \infty \bigr] =1 \) if \(2p_1 \le b^2\), and
\(\mathbb{P} \bigl[ \lim_{t\to t_{(0,+\infty)}^*} X_t= +\infty \bigr] =1 \) if \(2p_1> b^2\).

Consider now the case \(p_0 =0\). Since
\[
I_2= -\int_0^x \exp\left\{ \frac{2}{b^2} \int_v^x \frac{p_1 u}{u^2}\,du \right\} dv
=-\int_0^x \left(\frac{x}{v}\right)^{2p_1/b^2} dv,
\]
we get \(I_2> -\infty\) for \(2p_1 < b^2\), and \(I_2= -\infty\) for \(2p_1 \ge b^2\).
Theorem~3.1 in~\cite[p.~447]{no11} yields
\(\mathbb{P} \bigl[ \lim_{t\to t_{(0,+\infty)}^*} X_t= 0 \bigr] =1 \) if \(2p_1 < b^2\),
\(\mathbb{P} \bigl[ t_{(0,+\infty)}^*= \infty \bigr] =1 \) if \(2p_1 = b^2\), and
\(\mathbb{P} \bigl[ \lim_{t\to t_{(0,+\infty)}^*} X_t= +\infty \bigr] =1 \) if \(2p_1> b^2\).
\end{example}

\begin{example}
\label{ex:2.5}
Let
\[
p(u)=
\begin{cases}
p_1 (u+p_2)^{\alpha} &\text{if} \quad u\ge0, \\
p_1 p_2^{\alpha} &\text{if} \quad u<0.
\end{cases}
\]
We put the following restrictions on the parameters of \(p(u)\):
\(\alpha >1\), \(p_1 >0\), and \(p_2 \ge0\).

Since
\begin{equation*}
\begin{split}
&\lim_{v\to +\infty} \left( (1+\varepsilon) \ln v
-\frac{2}{b^2} \int_x^v \frac{p_1 (u+p_2)^{\alpha}}{u^2}\,du \right)
\le \lim_{v\to +\infty} \left( (1+\varepsilon) \ln v
-\frac{2p_1}{b^2} \int_x^v u^{\alpha-2}\,du \right)\\
=&\lim_{v\to +\infty} \left( (1+\varepsilon) \ln v
-\frac{2p_1 \left( v^{\alpha-1}-x^{\alpha-1} \right)}{b^2 (\alpha-1)} \right)
= -\infty
\end{split}
\end{equation*}
for all \(\varepsilon >0\), Lemma~\ref{lem:A.1} gives \(I_1< +\infty\).

If \(p_2 >0\), then \(\mathbb{P} \bigl[ \lim_{t\to t_{(0,+\infty)}^*} X_t= +\infty \bigr] =1 \)
by Proposition~\ref{pr:2.1}.

For \(p_2 =0\), we have
\[
I_2
=-\int_0^x \exp \left\{ \frac{2p_1}{b^2} \int_x^v u^{\alpha-2}\,du \right\} dv
=-\int_0^x \exp \left\{ \frac{2p_1 \left( v^{\alpha-1}-x^{\alpha-1} \right)}{b^2 (\alpha-1)} \right\} dv
>-\infty.
\]
Hence, in this case \( \lim_{t\to t_{(0,+\infty)}^*} X_t\) exists a.s.,
\(0< \mathbb{P} \bigl[ \lim_{t\to t_{(0,+\infty)}^*} X_t= +\infty \bigr] <1 \),
and \(\mathbb{P} \bigl[ \lim_{t\to t_{(0,+\infty)}^*} X_t= 0 \bigr]
=1- \mathbb{P} \bigl[ \lim_{t\to t_{(0,+\infty)}^*} X_t= +\infty \bigr] \)
by Theorem~3.1 in~\cite[p.~447]{no11}.
\end{example}

\begin{example}
\label{ex:2.6}
Let
\begin{equation}
\label{eq:2.3}
p(u)=
\begin{cases}
p_2 u^2 +p_1 u +p_0 &\text{if} \quad u\ge0, \\
p_0 &\text{if} \quad u<0.
\end{cases}
\end{equation}
If \(p_0 \ge0\), \(p_1 \ge0\),  and \(p_2 >0\), then \(p(u)\) has all the properties required.

For all \(\varepsilon >0\), we have
\begin{equation*}
\begin{split}
&\lim_{v\to +\infty} \left( (1+\varepsilon) \ln v
-\frac{2}{b^2} \int_x^v \frac{p_2 u^2 +p_1 u +p_0}{u^2}\,du \right)
\le \lim_{v\to +\infty} \left( (1+\varepsilon) \ln v
-\frac{2}{b^2} \int_x^v p_2\,du \right)\\
=&\lim_{v\to +\infty} \left( (1+\varepsilon) \ln v
-\frac{2p_2 (v-x)}{b^2} \right)
= -\infty.
\end{split}
\end{equation*}
Hence, \(I_1< +\infty\) by Lemma~\ref{lem:A.1}.

If \(p_0 >0\), then \(\mathbb{P} \bigl[ \lim_{t\to t_{(0,+\infty)}^*} X_t= +\infty \bigr] =1 \)
by Proposition~\ref{pr:2.1}.

For \(p_0 =0\), we get
\[
I_2= -\int_0^x \exp\left\{ \frac{2}{b^2} \int_v^x \frac{p_2 u^2 +p_1 u}{u^2}\,du \right\} dv
=-\int_0^x \left(\frac{u}{y}\right)^{2p_1/b^2} \!\!\cdot
\exp \left\{ \frac{2p_2(x-v)}{b^2} \right\}dv.
\]
This gives \(I_2> -\infty\) for \(2p_1 < b^2\), and \(I_2= -\infty\) for \(2p_1\ge b^2\).
Consequently, if \(2p_1 < b^2\), then \( \lim_{t\to t_{(0,+\infty)}^*} X_t\) exists a.s.,
\(0< \mathbb{P} \bigl[ \lim_{t\to t_{(0,+\infty)}^*} X_t= +\infty \bigr] <1 \),
and \(\mathbb{P} \bigl[ \lim_{t\to t_{(0,+\infty)}^*} X_t= 0 \bigr]
=1- \mathbb{P} \bigl[ \lim_{t\to t_{(0,+\infty)}^*} X_t= +\infty \bigr] \);
if \(2p_1\ge b^2\), then \(\mathbb{P} \bigl[ \lim_{t\to t_{(0,+\infty)}^*} X_t= +\infty \bigr] =1 \).
\end{example}

One question still unanswered is whether \(t_{(0,+\infty)}^*\) is finite. We now study it
under the conditions of Example~\ref{ex:2.6}.

\begin{theorem}
\label{th:2.7}
Let \((X_t)_{t\ge0}\) be a strong solution of~\eqref{eq:2.1} and \(p(u)\)
be defined by~\eqref{eq:2.3} with \(p_0 \ge0\), \(p_1 \ge0\), and \(p_2 >0\). If
\(p_0 =0\) and \(\frac{2p_1}{b^2} <1\), then
\begin{equation}
\label{eq:2.4}
\mathbb{P} \bigl[ t_{(0,+\infty)}^*< \infty,\, X_{t_{(0,+\infty)}^*}\!= +\infty \bigr]
=\frac{\int_{0}^x v^{-2p_1/b^2} \!\cdot \exp \left\{ -\frac{2p_2 v}{b^2} \right\} dv}
{\int_{0}^{+\infty} v^{-2p_1/b^2} \!\cdot \exp \left\{ -\frac{2p_2 v}{b^2} \right\} dv};
\end{equation}
if either \(p_0 =0\) and \(\frac{2p_1}{b^2} \ge1\) or \(p_0 >0\), then
\begin{equation}
\label{eq:2.5}
\mathbb{P} \bigl[ t_{(0,+\infty)}^*< \infty,\, X_{t_{(0,+\infty)}^*}\!= +\infty \bigr] =1.
\end{equation}
\end{theorem}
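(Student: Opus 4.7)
I would split the proof into two pieces: identifying the probability that $X$ exits $(0,+\infty)$ at $+\infty$ rather than at $0$, and then showing this exit occurs at a finite physical time.

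\emph{Step 1: The exit probability at $+\infty$.} Normalise the scale function so that $s(x)=0$, $s'(v)=\exp\{-\tfrac{2}{b^2}\int_x^v p(u)/u^2\,du\}$, and hence $s(+\infty)=I_1$, $s(0)=I_2$ in the notation of~(2.2). In the subcritical regime $p_0=0,\;2p_1<b^2$, Example~2.6 gives $I_1<+\infty$ and $I_2>-\infty$, so Theorem~3.1 of~\cite[p.~447]{no11} yields
\[
P_x\bigl[\lim\nolimits_{t\to t^*_{(0,+\infty)}}X_t=+\infty\bigr]=\frac{-I_2}{I_1-I_2}.
\]
Substituting $p(u)=p_2u^2+p_1u$ into $s'$ and cancelling the constant prefactor $x^{2p_1/b^2}e^{2p_2x/b^2}$ between numerator and denominator reproduces exactly the ratio on the right-hand side of~(2.4). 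In all other regimes covered by the theorem Example~2.6 already gives $\lim_{t\to t^*_{(0,+\infty)}}X_t=+\infty$ a.s.

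\emph{Step 2: Finiteness of the explosion time.} As Remark~2.3 emphasises, Feller's test does not by itself separate $\{\lim X=+\infty,\,t^*<+\infty\}$ from $\{\lim X=+\infty,\,t^*=+\infty\}$, so I would work with the inversion $Z_t=1/X_t$. Itô's formula gives
\[
dZ_t=\bigl[(b^2-p_1)Z_t-p_2-p_0Z_t^2\bigr]\,dt-bZ_t\,dW_t,
\]
and the event $\{X_{t^*_{(0,+\infty)}}=+\infty\}$ coincides with $\{Z\text{ reaches }0\text{ from above}\}$. When $p_0=0$ the equation is linear in $Z$, and variation of parameters gives the closed form
\[
Z_t=U_t\Bigl(\tfrac{1}{x}-p_2 A_t\Bigr),\quad U_t=\exp\{(\tfrac{b^2}{2}-p_1)t-bW_t\},\quad A_t=\int_0^t U_s^{-1}\,ds,
\]
so the explosion time is $\sigma=\inf\{t\ge0:A_t=1/(p_2x)\}$ and $\{\sigma<+\infty\}=\{A_\infty>1/(p_2x)\}$. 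If $2p_1\ge b^2$, the drift of $\log U_s^{-1}$ is nonnegative, hence $A_\infty=+\infty$ a.s.\ and $\sigma<+\infty$ a.s., giving~(2.5). If $2p_1<b^2$, the exponential functional $A_\infty$ is a.s.\ finite; its law (Dufresne's identity) is inverse-Gamma, and a direct change of variable converts $P[A_\infty>1/(p_2x)]$ into exactly the ratio in~(2.4), matching the scale-function answer from Step~1 and identifying $\{\lim X=+\infty\}$ with $\{\sigma<+\infty\}$ up to null sets. For $p_0>0$ the extra drift $-p_0Z_t^2\le0$ only accelerates $Z$'s descent; since the $Z$-drift tends to the negative constant $-p_2$ as $Z\to0^+$, once $Z$ is small enough it must cross $0$ in bounded time, and combined with the scale-function conclusion that $Z_t\to0$ a.s.\ this forces $\sigma<+\infty$ a.s.

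\textbf{Main obstacle.} The hard point, as the remark after Proposition~2.1 warns, is precisely the separation of finite-time explosion from infinite-time drift to infinity on the event $\{\lim X=+\infty\}$: the scale function alone gives only the limit and Feller's test alone gives only the finiteness, with no pairing. The $Z=1/X$ inversion is what makes the explosion an explicit first-passage event for a tractable Brownian functional, and getting $P[A_\infty>1/(p_2x)]$ to equal the scale-function ratio in~(2.4) requires Dufresne's exponential-functional identity together with a careful change of variable.
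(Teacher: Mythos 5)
Your route is genuinely different from the paper's and, for the case \(p_0=0\), it is complete and arguably more explicit. The paper never inverts the process: it works directly with \(X\) on the shrinking domains \((1/n,+\infty)\), solves the boundary value problem \(\tfrac12 b^2x^2M_n''+(p_2x^2+p_1x+p_0)M_n'=-1\) in closed form to show \(\mathbb{E}[t^*_{(1/n,+\infty)}]<\infty\) (hence the exit from \((1/n,+\infty)\) is a.s.\ finite), applies the classical exit-probability formula from \cite[p.~343--344]{no13}, and passes to the limit \(n\to\infty\) by monotone continuity of measure. Your substitute for the expected-exit-time step --- solving the linear SDE for \(Z=1/X\) by variation of parameters and reducing explosion to the first passage of the exponential functional \(A_t\) past the level \(1/(p_2x)\) --- is correct (I checked the It\^o computation and the Dufresne/inverse-Gamma identification of \(\mathbb{P}[A_\infty>1/(p_2x)]\) against the ratio in \eqref{eq:2.4}; they agree after the substitution \(t=2p_2v/b^2\)), and it buys you something the paper's argument does not: an explicit representation of the explosion time itself, not just of its finiteness.

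The genuine gap is in the case \(p_0>0\) with \(2p_1<b^2\). There the \(Z\)-equation is no longer linear, your comparison with the \(p_0=0\) solution only yields \(\mathbb{P}[\sigma<\infty]\ge\mathbb{P}[\tilde\sigma<\infty]\), which is strictly less than \(1\) in this parameter range, and the remaining assertion ``once \(Z\) is small enough it must cross \(0\) in bounded time'' is a heuristic, not a proof: \(Z_t\to0\) a.s.\ is compatible a priori with \(Z\) converging to \(0\) in infinite time without attaining it. To close this you need a quantitative statement --- e.g.\ that \(0\) is an accessible (exit) boundary for \(Z\), equivalently that the expected exit time of \(Z\) from \((0,\varepsilon)\) is finite, or a uniform-in-\(z\) lower bound \(\mathbb{P}_z[Z\text{ hits }0\text{ before time }T]\ge\delta>0\) for \(z\in(0,\varepsilon)\) combined with the strong Markov property and \(Z_t\to0\). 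Either version is exactly the paper's computation \(\mathbb{E}[t^*_{(1/n,+\infty)}]<\infty\) in disguise. An alternative patch staying within your framework: since \(X_t\to+\infty\) a.s., \(X\) hits every level \(M\) at a finite time; restarting there and comparing with the linear equation gives explosion probability at least \(\mathbb{P}[A_\infty>1/(p_2M)]\to1\) as \(M\to\infty\), whence \eqref{eq:2.5} follows by iterating the strong Markov property along levels \(M_k\uparrow\infty\). Without one of these additions the \(p_0>0\), \(2p_1<b^2\) subcase is not proved.
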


\begin{proof}
Let \(n_0= \min\{n\in \mathbb{N} \colon 1/n<x\}\). For all integer \(n\) such that \(n\ge n_0\),
we denote by \(t_{(1/n,+\infty)}^*\) the first exit time from \((1/n,+\infty)\) for \((X_t)_{t\ge0}\), i.e.
\(t_{(1/n,+\infty)}^*= \inf\{t\ge0\colon X_t\notin (1/n,+\infty)\}\).

Note that the sequence of events \(\bigl(\bigl\{ \omega\in\Omega \colon t_{(1/n,+\infty)}^*(\omega)< \infty,\,
X_{t_{(1/n,+\infty)}^*}(\omega)= +\infty \bigr\}\bigr)_{n\ge n_0} \) is monotone nondecreasing.
Hence,
\begin{equation*}
\begin{split}
&\lim_{n\to\infty} \bigl\{ \omega\in\Omega \colon t_{(1/n,+\infty)}^*(\omega)< \infty,\,
X_{t_{(1/n,+\infty)}^*}(\omega)= +\infty \bigr\}\\
=&\bigcup_{n=n_0}^{\infty} \bigl\{ \omega\in\Omega \colon t_{(1/n,+\infty)}^*(\omega)< \infty,\,
X_{t_{(1/n,+\infty)}^*}(\omega)= +\infty \bigr\}.
\end{split}
\end{equation*}

Furthermore,
\begin{equation*}
\begin{split}
&\bigcup_{n=n_0}^{\infty} \bigl\{ \omega\in\Omega \colon t_{(1/n,+\infty)}^*(\omega)< \infty,\,
X_{t_{(1/n,+\infty)}^*}(\omega)= +\infty \bigr\}\\
=&\bigl\{ \omega\in\Omega \colon t_{(0,+\infty)}^*(\omega)< \infty,\,
X_{t_{(0,+\infty)}^*}(\omega)= +\infty \bigr\}.
\end{split}
\end{equation*}

Therefore by the continuity of probability measures, we conclude that
\begin{equation}
\label{eq:2.6}
\begin{split}
\mathbb{P} \bigl[ t_{(0,+\infty)}^*< \infty,\, X_{t_{(0,+\infty)}^*}\!= +\infty \bigr]
&=\mathbb{P} \Bigl[ \lim_{n\to\infty} \bigl\{ t_{(1/n,+\infty)}^*< \infty,\,
X_{t_{(1/n,+\infty)}^*}\!= +\infty \bigr\}\Bigr]\\
&=\lim_{n\to\infty}\mathbb{P} \bigl[ t_{(1/n,+\infty)}^*< \infty,\, X_{t_{(1/n,+\infty)}^*}\!= +\infty \bigr].
\end{split}
\end{equation}

From \cite[p.~343--344]{no13} it follows that
\( \mathbb{E} [t_{(1/n,+\infty)}^*] =M_n(x) \) for all \(n\ge n_0\),
where \(M_n(x)\) is a solution of the boundary value problem
\begin{equation}
\label{eq:2.7}
\frac{1}{2} b^2 x^2 M_n''(x) +(p_2 x^2 +p_1 x +p_0) M_n'(x) =-1,
\quad M_n \left( \frac{1}{n} \right)=0, \: M_n(+\infty)=0,
\end{equation}
which can be solved in a certain way (see, e.g.,~\cite{no1}).
Here and subsequently, the value of a function at \(+\infty\) stands for its limit
as the value of the argument tends to \(+\infty\).

Boundary value problem~\eqref{eq:2.7} has the unique solution
\[
M_n(x)= \frac{2m_n(x)}{b^2 m_n(+\infty)}
\int_{1/n}^{+\infty}\frac{m_n(+\infty)-m_n(z)}{z^2 m_n'(z)} \,dz-
\frac{2}{b^2} \int_{1/n}^{x}\frac{m_n(x)-m_n(z)}{z^2 m_n'(z)} \,dz,
\]
where
\[
m_n(x)=\int_{1/n}^{x} \exp\left\{ -\frac{2}{b^2}
\int_{1/n}^{v} \frac{p_2 u^2 +p_1 u +p_0}{u^2} \,du \right\}dv.
\]

Note that \(m_n(+\infty) <+\infty\). Furthermore, since
\[
\lim_{z\to+\infty} \frac{m_n(+\infty)-m_n(z)}{m_n'(z)}
=\lim_{z\to+\infty} \frac{\int_{z}^{+\infty} \exp\left\{ -\frac{2}{b^2} \int_{1/n}^{v}
\frac{p_2 u^2 +p_1 u +p_0}{u^2} \,du \right\}dv}
{\exp\left\{ -\frac{2}{b^2} \int_{1/n}^{z}\frac{p_2 u^2 +p_1 u +p_0}{u^2} \,du \right\}}
=\frac{b^2}{2p_2} <+\infty
\]
(here we applied L'Hopital's rule) and
\(\int_{1/n}^{+\infty}\frac{1}{z^2} \,dz <+\infty\), we get
\[
\int_{1/n}^{+\infty}\frac{m_n(+\infty)-m_n(z)}{z^2 m_n'(z)} \,dz <+\infty.
\]

Thus, \(\mathbb{E} [t_{(1/n,+\infty)}^*]< \infty\) for all \(n\ge n_0\).
This gives \(\mathbb{P} [t_{(1/n,+\infty)}^*< \infty]=1\) for all \(n\ge n_0\).
Moreover, by~\cite[p.~343--344]{no13}, we have
\begin{equation}
\label{eq:2.8}
\begin{split}
\mathbb{P} \bigl[ X_{t_{(1/n,+\infty)}^*}\!= +\infty \bigr]
&=\frac{\int_{1/n}^x \exp \left\{ -\frac{2}{b^2} \int_{1/n}^v \frac{p_2 u^2 +p_1 u +p_0}{u^2} \,du \right\} dv}
{\int_{1/n}^{+\infty} \exp \left\{ -\frac{2}{b^2} \int_{1/n}^v \frac{p_2 u^2 +p_1 u +p_0}{u^2} \,du \right\} dv}\\
&=\frac{\int_{1/n}^x v^{-2p_1/b^2} \!\cdot \exp \left\{ \frac{2p_0}{b^2 v} -\frac{2p_2 v}{b^2} \right\} dv}
{\int_{1/n}^{+\infty} v^{-2p_1/b^2} \!\cdot \exp \left\{ \frac{2p_0}{b^2 v} -\frac{2p_2 v}{b^2} \right\} dv}.
\end{split}
\end{equation}

Consequently,~\eqref{eq:2.6} and~\eqref{eq:2.8} yield
\begin{equation}
\label{eq:2.9}
\mathbb{P} \bigl[ t_{(0,+\infty)}^*< \infty,\, X_{t_{(0,+\infty)}^*}\!= +\infty \bigr]
=\lim_{n\to\infty}
\frac{\int_{1/n}^x v^{-2p_1/b^2} \!\cdot \exp \left\{ \frac{2p_0}{b^2 v} -\frac{2p_2 v}{b^2} \right\} dv}
{\int_{1/n}^{+\infty} v^{-2p_1/b^2} \!\cdot \exp \left\{ \frac{2p_0}{b^2 v} -\frac{2p_2 v}{b^2} \right\} dv}.
\end{equation}

Consider now two cases.
\begin{enumerate}
\item
If \(p_0 =0\) and \(\frac{2p_1}{b^2} <1\), then both of the integrals in the right-hand side of~\eqref{eq:2.9}
are finite as \(n\to\infty\). This yields~\eqref{eq:2.4}.
Note that in this case
\(0< \mathbb{P} \bigl[ t_{(0,+\infty)}^*< \infty,\, X_{t_{(0,+\infty)}^*}\!= +\infty \bigr] <1\).

\item
If either \(p_0 =0\) and \(\frac{2p_1}{b^2} \ge1\) or \(p_0 >0\), then both of the integrals
in the right-hand side of~\eqref{eq:2.9} are infinite as \(n\to\infty\).
Applying L'Hopital's rule we obtain~\eqref{eq:2.5}.
\end{enumerate}

The theorem is proved.
\end{proof}

\begin{remark}
\label{rem:2.8}
Since \(c(u)\) is positive by our assumption, the surplus of the insurance company
becomes infinitely large in finite time a.s. if the premium intensity
is a quadratic function and the claims do not arrive.
Note that the time interval between two successive claims can be large enough
with positive probability. Hence, the process \(\bigl(X_t(x)\bigr)_{t\ge0}\)
that follows~\eqref{eq:1.3} goes to \(+\infty\) with positive probability.
It is clear that the ruin does not occur in this case. Consequently, from now on
we can consider \(\bigl(X_t(x)\bigr)_{t\ge0}\) up to the minimum from the ruin time
and its possible explosion.
\end{remark}

\section{Existence and uniqueness theorem}\label{se:3}

Consider now equation~\eqref{eq:1.3}. Let \(t^*(x)\) be a possible explosion time of
\(\bigl(X_t(x)\bigr)_{t\ge0}\), i.e. \(t^*(x)= \inf\{t\ge0\colon X_t(x)=\infty\}\).
To shorten notation, we let \(t^*\) stand for \(t^*(x)\).

\begin{theorem}
\label{th:3.1}
If \(c(u)\) is a locally Lipschitz continuous function on \(\mathbb{R}\), then~\eqref{eq:1.3}
has a unique strong solution up to the time \(\tau \wedge t^*\).
\end{theorem}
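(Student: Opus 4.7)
The plan is to reduce equation~\eqref{eq:1.3} to the jump-free SDE studied in Section~\ref{se:2} on each interval between consecutive claim arrivals, and then patch the pieces together via the deterministic downward jumps at the $\tau_i$'s. Since $(N_t)_{t\ge0}$ has only finitely many jumps on any compact interval almost surely, it suffices to build $(X_t)$ inductively on each stochastic interval $[\tau_{i-1},\tau_i)$.

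First, on $[\tau_{i-1},\tau_i)$ the compensator sum is constant, so equation~\eqref{eq:1.3} reduces to
\begin{equation*}
X_t = X_{\tau_{i-1}} + \int_{\tau_{i-1}}^t p(X_s)\,ds + b\int_{\tau_{i-1}}^t X_s\,dW_s,
\end{equation*}
with $p(u)=c(u)+au$. Since $c$ is locally Lipschitz on $\mathbb{R}$ by hypothesis and $au$ is (globally) Lipschitz, $p$ is locally Lipschitz on $\mathbb{R}$. Hence this is exactly equation~\eqref{eq:2.1} with the new drift $p$, and by Theorem~3.1 in~\cite[p.~178--179]{no11} (already invoked in Section~\ref{se:2}) it has a pathwise-unique strong solution up to its own explosion time, starting from the $\mathfrak{F}_{\tau_{i-1}}$-measurable initial value $X_{\tau_{i-1}}$. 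The independence of $(W_t)$, $(N_t)$, and $(Y_i)$ together with $\tau_{i-1}$ being an $(\mathfrak{F}_t)$-stopping time lets us legitimately apply the theorem after the time shift by $\tau_{i-1}$.

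Second, I set up the induction. Put $X_0=x$ and suppose that $(X_t)_{t\le\tau_{i-1}}$ has been constructed uniquely with $\tau\wedge t^*>\tau_{i-1}$. Run the diffusion above on $[\tau_{i-1},\tau_i)$; if it explodes before $\tau_i$ then $t^*\le\tau_i$ and we stop, and likewise if it ever becomes negative then $\tau$ has been reached. Otherwise at time $\tau_i$ set $X_{\tau_i}=X_{\tau_i-}-Y_i$; if this is negative then $\tau=\tau_i$ and we stop, otherwise we continue on $[\tau_i,\tau_{i+1})$. For uniqueness, any two strong solutions of~\eqref{eq:1.3} must, on $[\tau_{i-1},\tau_i)$, solve the same jump-free SDE and therefore coincide by pathwise uniqueness from Theorem~3.1 of~\cite{no11}, and the values at $\tau_i$ are then forced by the common jump $-Y_i$; induction yields uniqueness up to $\tau\wedge t^*$.

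The main delicate point is a clean bookkeeping of the explosion time $t^*$ together with the sequence $(\tau_i)$: one has to verify that the piecewise-built process does not explode \emph{at} a jump time (so that the value $X_{\tau_i-}$ is well defined on the event $\{t^*>\tau_i\}$) and that the explosion time of the aggregated process coincides with the first explosion time of any of the individual diffusion segments. This is essentially a measurability/patching argument using that each $\tau_i$ is a stopping time for $(\mathfrak{F}_t)$ and that local solvability and pathwise uniqueness propagate through the countable sequence of strong-Markov restarts; once this is said carefully, the conclusion on $[0,\tau\wedge t^*]$ is immediate.
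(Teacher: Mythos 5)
Your proposal follows essentially the same route as the paper: restrict \eqref{eq:1.3} to each inter-arrival interval $[\tau_{i-1},\tau_i)$, where it becomes a jump-free SDE with locally Lipschitz coefficients, invoke Theorem~3.1 of \cite[p.~178--179]{no11} for pathwise-unique strong solvability up to explosion, and patch the segments together by induction through the deterministic jumps $-Y_i$. The one ingredient you defer as a ``delicate point'' is resolved in the paper by a concrete tool you do not use: the comparison theorem (Theorem~1.1 in \cite[p.~437--438]{no11}) shows that on each segment the solution dominates the geometric Brownian motion solving \eqref{eq:3.2} started from the same positive value, hence stays strictly positive between claims. This immediately gives that the diffusion segment cannot become negative (so ruin can only be triggered by a claim, contrary to the scenario you allow), that any explosion is necessarily to $+\infty$ (so $X_{t^*}=+\infty$, consistent with the definition of $t^*$), and that $X_{\tau_i-}$ is finite and well defined on $\{t^*>\tau_i\}$; with that observation your patching argument closes without further measurability gymnastics.
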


\begin{proof}
Since the process \((N_t)_{t\ge 0}\) is homogeneous, it has only a finite number of jumps
on any finite time interval a.s. To prove the theorem, we study~\eqref{eq:1.3} between
two successive jumps of \(N_t\).

Let us first consider~\eqref{eq:1.3} on the time interval \([\tau_0, \tau_1)\). It can be rewritten as
\begin{equation}
\label{eq:3.1}
X_t= X_{\tau_0}+ \int_{\tau_0}^t \bigl( c(X_s)+aX_s \bigr) \,ds+ b\int_{\tau_0}^t X_s \,dW_s,
\quad
\tau_0 \le t < \tau_1.
\end{equation}

By Theorem~3.1 in~\cite[p.~178--179]{no11}, the locally Lipschitz continuity of
\(c(u)+au\) and \(bu\) on \(\mathbb{R}\) implies
the existence of a unique strong solution of~\eqref{eq:3.1} on \([\tau_0, \tau_1 \wedge t^*)\).
Moreover, the comparison theorem (see, e.g., Theorem~1.1 in~\cite[p.~437--438]{no11})
shows that this solution is not less then the solution of
\begin{equation}
\label{eq:3.2}
X_t= X_{\tau_0}+ a\int_{\tau_0}^t X_s \,ds+ b\int_{\tau_0}^t X_s \,dW_s,
\quad
\tau_0 \le t \le \tau_1 \wedge t^*,
\end{equation}
a.s. Since the solution of~\eqref{eq:3.2} is positive, so is the solution of~\eqref{eq:3.1}
on \([\tau_0, \tau_1 \wedge t^*)\).
Hence, \(X_{t^*}= +\infty\) if \(t^* \le \tau_1\). Thus, the ruin does not occur
up to the time \(\tau_1 \wedge t^*\).

If \(t^* \le \tau_1\), then the theorem follows. Otherwise \(X_{\tau_{1-}}< +\infty\) and we set
\(X_{\tau_1}= X_{\tau_{1-}} -Y_1\). Next, if \(X_{\tau_1}<0\), then \(\tau=\tau_1\), which
completes the proof. Otherwise we consider~\eqref{eq:1.3} on the time interval \([\tau_1, \tau_2)\).
We rewrite it as
\begin{equation}
\label{eq:3.3}
X_t= X_{\tau_1}+ \int_{\tau_1}^t \bigl( c(X_s)+aX_s \bigr) \,ds+ b\int_{\tau_1}^t X_s \,dW_s,
\quad
\tau_1 \le t < \tau_2.
\end{equation}

Repeating the same arguments, we conclude that~\eqref{eq:3.3} has a unique strong solution
on \([\tau_1, \tau_2 \wedge t^*)\) and the ruin does not occur up to the time \(\tau_2 \wedge t^*\).

Thus, we have proved that~\eqref{eq:1.3} has a unique strong solution on \([0, \tau_2 \wedge t^*)\),
which is our assertion if \(t^* \le \tau_2\). For the case \(t^* > \tau_2\), we set
\(X_{\tau_2}= X_{\tau_{2-}} -Y_2\). Next, if \(X_{\tau_2} <0\), then \(\tau=\tau_2\),
which proves the theorem. Otherwise we continue in this fashion and prove the theorem by induction.
\end{proof}

\begin{remark}
\label{rem:3.2}
Note that if \(t^* <\infty\), then the proof of Theorem~\ref{th:3.1} implies \(X_{t^*}= +\infty\)
and~\eqref{eq:1.3} also holds for \(t=t^*\) provided that we let both of its sides be equal to \(+\infty\).
In addition, if \(\tau <\infty\), then we set \(X_{\tau}= X_{\tau_{i-}}-Y_i\), where \(i\) is the number
of the claim that caused the ruin, and~\eqref{eq:1.3} also holds for \(t=\tau\).
\end{remark}

\section{Supermartingale property for the exponential process}\label{se:4}

Let the stopped process \(\bigl(\tilde X_t(x)\bigr)_{t\ge 0}\) be defined by
\(\tilde X_t(x)= X_{t \wedge \tau \wedge t^*}(x)\).
Note that \(\bigl(\tilde X_t(x)\bigr)_{t\ge 0}\) is a solution of~\eqref{eq:1.3}
provided that so is \(\bigl(X_t(x)\bigr)_{0\le t< \tau \wedge t^*}\).

For all \(r\ge 0\), we define the processes \(\bigl(U_t(x,r)\bigr)_{t\ge 0}\)
and \(\bigl(V_t(x,r)\bigr)_{t\ge 0}\) by
\[
U_t(x,r)= -r \tilde X_t(x) \quad \text{and} \quad V_t(x,r)= e^{U_t(x,r)}.
\]

In what follows, we write \(\tilde X_t\), \(U_t\), and \(V_t\) instead of \(\tilde X_t(x)\), \(U_t(x,r)\), and \(V_t(x,r)\), respectively, when no confusion can arise.

\begin{theorem}
\label{th:4.1}
If~\eqref{eq:1.3} has a unique strong solution up to the time \(\tau \wedge t^*\) and
there exists \(\hat r \in (0,r_{\infty})\) such that
\begin{equation}
\label{eq:4.1}
\frac{\hat r^2 b^2}{2} u^2 -\hat r \bigl( c(u)+au \bigr) +\lambda h(\hat r) \le 0
\quad \text{for all}
\quad u\ge 0,
\end{equation}
then \(\bigl(V_t(x,r)\bigr)_{t\ge 0}\) is an \((\mathfrak{F}_t)\)-supermartingale.
\end{theorem}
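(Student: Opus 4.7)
The plan is to apply the Itô formula for semimartingales with jumps to $V_t = e^{-\hat r \tilde X_t}$ and verify that hypothesis \eqref{eq:4.1} is exactly what forces the resulting drift to be non-positive. By Theorem~\ref{th:3.1}, $\tilde X_t$ is a semimartingale satisfying~\eqref{eq:1.3} on $[0,\tau\wedge t^*)$: between the claim epochs it obeys $dX_t=(c(X_t)+aX_t)\,dt+bX_t\,dW_t$, and at each $\tau_i$ it has a jump of size $-Y_i$. For $f(x)=e^{-\hat r x}$ one has $f'(x)=-\hat r f(x)$ and $f''(x)=\hat r^{\,2}f(x)$, so Itô's formula on an interval between consecutive jumps produces a continuous drift $\bigl(\tfrac{\hat r^{\,2}b^{2}}{2}X_s^{2}-\hat r(c(X_s)+aX_s)\bigr)V_s$ together with the stochastic integral $-\int \hat r b X_s V_s\,dW_s$, while at each jump $V_{\tau_i}-V_{\tau_i-}=V_{\tau_i-}(e^{\hat r Y_i}-1)$.

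Next I would localise via $\sigma_n=\inf\{t\ge 0\colon X_t\ge n\}\wedge\tau\wedge t^*$, so that $X$ and $V$ are bounded on $[0,\sigma_n]$ and $\sigma_n\uparrow\tau\wedge t^*$ a.s. On $[0,\sigma_n]$ the Brownian integral is a genuine martingale, and because $\mathbb{E}[e^{\hat r Y_i}]=1+h(\hat r)$ the compensator of the jump-sum $\sum_{\tau_i\le\cdot}V_{\tau_i-}(e^{\hat r Y_i}-1)$ is $\lambda h(\hat r)\int_0^{\cdot}V_s\,ds$. Collecting these pieces gives
\begin{equation*}
V_{t\wedge\sigma_n}=V_0+\int_0^{t\wedge\sigma_n}G(X_s)\,V_s\,ds+M^{(n)}_t,
\end{equation*}
where $G(u)=\tfrac{\hat r^{\,2}b^{2}}{2}u^{2}-\hat r(c(u)+au)+\lambda h(\hat r)$ and $M^{(n)}$ is an honest $(\mathfrak{F}_t)$-martingale. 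Since $\tilde X_s\ge 0$ for Lebesgue-a.e.\ $s<\tau\wedge t^*$ (the only point where the surplus drops below zero is the single ruin epoch, which has Lebesgue measure zero), hypothesis~\eqref{eq:4.1} forces $G(X_s)\le 0$ a.e., and hence $V_{t\wedge\sigma_n}$ is an $(\mathfrak{F}_t)$-supermartingale.

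Finally I would let $n\to\infty$. The process $V$ is non-negative and, for $t<\tau$, bounded above by $1$ since $\tilde X\ge 0$; the possible terminal jump at ruin is dominated by $e^{\hat r Y_{N_\tau}}$, whose expectation is finite because $\hat r<r_{\infty}$. These bounds yield $\mathbb{E}V_t<\infty$ for every $t$, so a conditional Fatou argument applied to the supermartingale inequality for the stopped processes transfers it to $V$ itself. I expect the main obstacle to be essentially bookkeeping rather than computation: rigorously justifying Itô's formula and the compensator identity for the jump-diffusion $\tilde X$ up to the explosion time, and separating the Lebesgue-null ruin epoch from the integration region so that~\eqref{eq:4.1} (which is only assumed for $u\ge 0$) can actually be invoked inside the drift. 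For this, the explicit handling of the explosion in Section~\ref{se:2} and the construction of the unique strong solution in Section~\ref{se:3} are the indispensable inputs.
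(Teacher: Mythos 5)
Your proposal is correct and follows essentially the same route as the paper's proof: Itô's formula for the jump-diffusion applied to $e^{-\hat r\tilde X_t}$, compensation of the claim-jump sum by $\lambda h(\hat r)\int e^{U_{s-}}\,ds$, localization at the hitting times of level $n$ so that condition~\eqref{eq:4.1} makes the finite-variation part nonincreasing, and a conditional Fatou argument using nonnegativity of $V$ to upgrade the local supermartingale to a genuine one. The only cosmetic difference is your localizing sequence $\sigma_n$ versus the paper's $T_n=\inf\{t\ge0\colon X_t\ge n\}\wedge n$, which changes nothing essential.
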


\begin{proof}
Since \(\bigl(\tilde X_t\bigr)_{t\ge 0}\) is a solution of~\eqref{eq:1.3}, we have
\begin{equation}
\label{eq:4.2}
U_t= -rx -r\int_0^{t \wedge \tau \wedge t^*} \!\bigl( c(X_s)+aX_s \bigr) \,ds
-rb\int_0^{t \wedge \tau \wedge t^*} \!X_s \,dW_s
+r\! \sum_{i=1}^{N_{t \wedge \tau \wedge t^*}} \!Y_i,
\quad
t\ge 0.
\end{equation}

The process \((\tilde X_t)_{t\ge 0}\) is a sum of local martingales and c{\`a}dl{\`a}g
processes of locally bounded variation.\\
Indeed, since
\(\mathbb{E} \left[\left| \int_0^{t \wedge \tau \wedge t^* \wedge T_n}
\!X_s \,dW_s \right|\right] <+\infty \)
for all \(t\ge 0\), the process \( \left(\int_0^{t \wedge \tau \wedge t^*} \!X_s \,dW_s \right)_{t\ge 0}\)
is a local \((\mathfrak{F}_t)\)-martingale with the localizing sequence \((T_n)_{n\ge 1}\), where
\[
T_n= \inf\{t\ge0\colon X_t\ge n\} \wedge n.
\]
Similarly, \( \left(\int_0^{t \wedge \tau \wedge t^*} \!X_s \,ds \right)_{t\ge 0}\)
and \( \left(\int_0^{t \wedge \tau \wedge t^*} \!c(X_s) \,ds \right)_{t\ge 0}\)
are c{\`a}dl{\`a}g processes of locally bounded variation
with the localizing sequence \((T_n)_{n\ge 1}\).
Next, the process
\[
\left( \sum_{i=1}^{N_{t \wedge \tau \wedge t^*}} Y_i
-\lambda\mu (t \wedge \tau \wedge t^*) \right)_{t\ge 0}
\]
is a compensated process with independent increments. Hence, it is an \((\mathfrak{F}_t)\)-martingale.

Thus, \((U_t)_{t\ge 0}\) is an \((\mathfrak{F}_t)\)-semimartingale and so is \((V_t)_{t\ge 0}\).
Applying It{\^o}'s formula
\begin{equation*}
\begin{split}
g(U_t)- g(U_0)
=\int_{0_+}^{t} g'(U_{s_-}) \,dU_s
&+\frac{1}{2} \int_{0_+}^{t} g''(U_{s_-}) \,d \langle U^c,U^c \rangle_s\\
&+\sum_{0<s\le t}\!\bigl( g(U_s)- g(U_{s_-})- g'(U_{s_-})(U_s- U_{s_-}) \bigr),
\quad
t\ge 0,
\end{split}
\end{equation*}
where \((U_t)_{t\ge 0}\) is a semimartingale, \((U_t^c)_{t\ge 0}\) is a continuous component of the local martingale in the decomposition of \((U_t)_{t\ge 0}\), and \(g\in C^2(\mathbb{R})\), we get
\begin{equation}
\label{eq:4.3}
\begin{split}
V_t= e^{-rx}
+\int_{0_+}^{t \wedge \tau \wedge t^*} e^{U_{s_-}} \,dU_s
&+\frac{1}{2} \int_{0_+}^{t \wedge \tau \wedge t^*} e^{U_{s_-}} \,d \langle U^c,U^c \rangle_s\\
&+\sum_{0<s\le t \wedge \tau \wedge t^*}\!\bigl( e^{U_s}- e^{U_{s_-}}- e^{U_{s_-}}(U_s- U_{s_-}) \bigr),
\quad
t\ge 0,
\end{split}
\end{equation}
where
\[
U_{t_-}= -rx -r\int_0^{t \wedge \tau \wedge t^*} \!\bigl( c(X_s)+aX_s \bigr) \,ds
-rb\int_0^{t \wedge \tau \wedge t^*} \!X_s \,dW_s
+r\! \sum_{0< s\le t_- \wedge \tau \wedge t^*} \!Y_{N_s} \mathbb{I}_{\{ \Delta N_s \ne 0 \}},
\]
\[
dU_s= -r \bigl( c(\tilde X_s)+ a\tilde X_s \bigr) \,ds-
rb \tilde X_s \,dW_s+ rY_{N_s} \mathbb{I}_{\{ \Delta N_s \ne 0 \}},
\]
\[
d \langle U^c,U^c \rangle_s= r^2 b^2 \tilde X_s^2,
\]
\[
e^{U_s}- e^{U_{s_-}}= e^{U_{s_-}}(e^{rY_{N_s} \mathbb{I}_{\{ \Delta N_s \ne 0 \}}} -1),
\]
\[
U_s- U_{s_-}= rY_{N_s} \mathbb{I}_{\{ \Delta N_s \ne 0 \}},
\]
\[
\Delta N_s= N_s- N_{s_-}.
\]

Substituting all the above equalities into~\eqref{eq:4.3} yields
\begin{equation}
\label{eq:4.4}
\begin{split}
V_t= e^{-rx}
&-r\int_{0_+}^{t \wedge \tau \wedge t^*} e^{U_{s_-}} \bigl( c(X_s)+aX_s \bigr) \,ds
-rb\int_{0_+}^{t \wedge \tau \wedge t^*} X_s \,dW_s\\
&+r\! \sum_{0< s\le t \wedge \tau \wedge t^*} \!e^{U_s} Y_{N_s} \mathbb{I}_{\{ \Delta N_s \ne 0 \}}
+\frac{1}{2} r^2 b^2 \int_{0_+}^{t \wedge \tau \wedge t^*} e^{U_{s_-}} X_s^2 \,ds\\
&+\sum_{0< s\le t \wedge \tau \wedge t^*} \!e^{U_s}
\bigl( e^{rY_{N_s} \mathbb{I}_{\{ \Delta N_s \ne 0 \}}}-
1- rY_{N_s} \mathbb{I}_{\{ \Delta N_s \ne 0 \}} \bigr),
\quad
t\ge 0.
\end{split}
\end{equation}

Simplifying~\eqref{eq:4.4} gives
\begin{equation}
\label{eq:4.5}
\begin{split}
V_t= e^{-rx}
&+\int_{0_+}^{t \wedge \tau \wedge t^*} e^{U_{s_-}}
\left( \frac{1}{2} r^2 b^2 X_s^2 -r\bigl( c(X_s)+aX_s \bigr) \right) ds\\
&-rb\int_{0_+}^{t \wedge \tau \wedge t^*} X_s \,dW_s
+\sum_{0< s\le t \wedge \tau \wedge t^*} \!e^{U_s}
\bigl( e^{rY_{N_s} \mathbb{I}_{\{ \Delta N_s \ne 0 \}}}-1 \bigr),
\quad
t\ge 0.
\end{split}
\end{equation}

Next, the process
\[
\left(  \sum_{0< s\le t \wedge \tau \wedge t^*} \!e^{U_s}
\bigl( e^{rY_{N_s} \mathbb{I}_{\{ \Delta N_s \ne 0 \}}}-1 \bigr) \right)
_{t\ge 0}
\]
is nondecreasing and can be written in the integral form
\[
\sum_{0< s\le t \wedge \tau \wedge t^*} \!e^{U_s}
\bigl( e^{rY_{N_s} \mathbb{I}_{\{ \Delta N_s \ne 0 \}}}-1 \bigr)=
\int_{0_+}^{t \wedge \tau \wedge t^*} e^{U_{s_-}} \,dQ_s,
\quad
t\ge 0,
\]
where
\[
Q_t= \sum_{0< s\le t \wedge \tau \wedge t^*} \!
\bigl( e^{rY_{N_s} \mathbb{I}_{\{ \Delta N_s \ne 0 \}}}-1 \bigr).
\]
By Wald's identity, \(\mathbb{E} [Q_t]= \lambda th(r)\).
Hence, \(\mathbb{E} [Q_t]< +\infty\) for all \(t\ge 0\) and \(r< r_{\infty}\).
Furthermore, since \( (Q_t)_{t\ge 0} \) is a process with independent increments,
the compensated process \(\bigl(Q_t- \mathbb{E}[Q_t]\bigr)_{t\ge 0} \)
is an \((\mathfrak{F}_t)\)-martingale.
Thus,
\[
\left( \sum_{0< s\le t \wedge \tau \wedge t^*} \!
\bigl( e^{rY_{N_s} \mathbb{I}_{\{ \Delta N_s \ne 0 \}}}-1 \bigr)-
\lambda h(r) \int_{0_+}^{t \wedge \tau \wedge t^*} e^{U_{s_-}} \,ds \right)
_{t\ge 0}
\]
is a local \((\mathfrak{F}_t)\)-martingale with the localizing sequence \((T_n)_{n\ge 1}\).

Since \(\Bigl( -rb\int_{0_+}^{t \wedge \tau \wedge t^*} X_s \,dW_s \Bigr)_{t\ge 0} \)
is also a local \((\mathfrak{F}_t)\)-martingale with the localizing sequence \((T_n)_{n\ge 1}\), so is
\[
\left( -rb\int_{0_+}^{t \wedge \tau \wedge t^*} X_s \,dW_s+
\sum_{0< s\le t \wedge \tau \wedge t^*} \!
\bigl( e^{rY_{N_s} \mathbb{I}_{\{ \Delta N_s \ne 0 \}}}-1 \bigr)-
\lambda h(r) \int_{0_+}^{t \wedge \tau \wedge t^*} e^{U_{s_-}} \,ds \right)
_{t\ge 0}.
\]

We define the process \((R_t)_{t\ge 0}\) by
\begin{equation*}
\begin{split}
R_t= V_t&- V_0+ rb\int_{0_+}^{t \wedge \tau \wedge t^*} X_s \,dW_s\\
&-\sum_{0< s\le t \wedge \tau \wedge t^*} \!
\bigl( e^{rY_{N_s} \mathbb{I}_{\{ \Delta N_s \ne 0 \}}}-1 \bigr)+
\lambda h(r) \int_{0_+}^{t \wedge \tau \wedge t^*} e^{U_{s_-}} \,ds.
\end{split}
\end{equation*}

Substituting \(V_t\) from~\eqref{eq:4.5} we obtain
\[
R_t=
\int_{0_+}^{t \wedge \tau \wedge t^*} e^{-rX_{s_-}}
\left( \frac{1}{2} r^2 b^2 X_s^2 -r\bigl( c(X_s)+aX_s \bigr) +\lambda h(r) \right) ds,
\quad
t\ge 0.
\]

By the Doob-Meyer decomposition, \((V_t)_{t\ge 0}\) is a local \((\mathfrak{F}_t)\)-supermartingale
with the localizing sequence \((T_n)_{n\ge 1}\) provided that \((R_t)_{t\ge 0}\) is
a measurable nonincreasing process, i.e.
\begin{equation}
\label{eq:4.6}
\int_{t_1 \wedge \tau \wedge t^*}^{t_2 \wedge \tau \wedge t^*} e^{-rX_{s_-}}
\left( \frac{1}{2} r^2 b^2 X_s^2 -r\bigl( c(X_s)+aX_s \bigr) +\lambda h(r) \right) ds \le 0 \quad
\text{for all} \quad
t_2 \ge t_1 \ge 0.
\end{equation}

By the assumption of the theorem, there exists \(\hat r \in (0,r_{\infty})\)
such that~\eqref{eq:4.1} holds. Therefore,~\eqref{eq:4.6} is true with \(r= \hat r\)
and \((V_t(x,\hat r))_{t\ge 0}\) is a nonnegative local \((\mathfrak{F}_t)\)-supermartingale
with the localizing sequence \((T_n)_{n\ge 1}\).

By Fatou's lemma, for all \(t_2 \ge t_1 \ge 0\), we get
\begin{equation*}
\begin{split}
0&\le \mathbb{E} \bigl[ V_{t_2}(x,\hat r) \,/\, \mathfrak{F}_{t_1} \bigr]
=\mathbb{E} \Bigl[ \lim_{n\to\infty}V_{t_2 \wedge T_n}(x,\hat r) \,/\, \mathfrak{F}_{t_1} \Bigr]
=\mathbb{E} \Bigl[ \liminf_{n\to\infty}V_{t_2 \wedge T_n}(x,\hat r) \,/\, \mathfrak{F}_{t_1} \Bigr]\\
&\le \liminf_{n\to\infty} \mathbb{E} \bigl[ V_{t_2 \wedge T_n}(x,\hat r) \,/\, \mathfrak{F}_{t_1} \bigr]
\le \liminf_{n\to\infty} V_{t_1 \wedge T_n}(x,\hat r)
=V_{t_1}(x,\hat r).
\end{split}
\end{equation*}

Hence, \((V_t(x,\hat r))_{t\ge 0}\) is an \((\mathfrak{F}_t)\)-supermartingale,
which completes the proof.
\end{proof}

Theorem~\ref{th:4.1} allows us to get an exponential bound for the ruin probability
under certain conditions.

\section{Exponential bound for the ruin probability}\label{se:5}

Let the premium intensity \(c(u)\) be a quadratic function for \(u\ge0\), i.e.
\begin{equation}
\label{eq:5.1}
c(u)=
\begin{cases}
c_2 u^2 +c_1 u +c_0 &\text{if} \quad u\ge0, \\
c_0 &\text{if} \quad u<0,
\end{cases}
\end{equation}
where \(c_2 \ne 0\). The function \(c(u)\) is strictly increasing and positive on \([0,+\infty)\)
if and only if \(c_0 >0\), \(c_1 \ge0\), and \(c_2 >0\).
This model implies that the premium intensity grows rapidly with increasing surplus.

\begin{theorem}
\label{th:5.1}
Let the surplus process \( \left(X_t(x)\right)_{t\ge 0} \) follow~\eqref{eq:1.3}
under the above assumptions, the premium intensity \(c(u)\) be defined by~\eqref{eq:5.1}
with \(c_0 >0\), \(c_1 \ge0\), and \(c_2 >0\),
and at least one of the following two conditions holds

1) \(\frac{2c_2}{b^2}< r_{\infty}\) and
\( h \left( \frac{2c_2}{b^2} \right) \le \frac{2 c_0 c_2}{b^2 \lambda} \);

2) \(\lambda \mu< c_0\).\\
Then for all \(x\ge 0\), we have
\begin{equation}
\label{eq:5.2}
\psi(x) \le e^{-\hat r x},
\end{equation}
where \(\hat r= \frac{2c_2}{b^2}\) if condition~1) holds,
and \(\hat r= \min \left\{r_0, \frac{2c_2}{b^2} \right\}\) if condition~2) holds.
Here \(r_0\) stands for a unique positive solution of
\begin{equation}
\label{eq:5.3}
h(r)=\frac{c_0 r}{\lambda}.
\end{equation}
\end{theorem}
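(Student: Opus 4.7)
The plan is to deduce the bound from Theorem~\ref{th:4.1} by exhibiting a suitable $\hat r$, and then to convert the resulting supermartingale property into an exponential ruin bound. First, since the quadratic $c(u)$ extended by $c_0$ on $(-\infty,0)$ is locally Lipschitz on $\mathbb{R}$, Theorem~\ref{th:3.1} furnishes the unique strong solution of~\eqref{eq:1.3} up to $\tau\wedge t^*$ that Theorem~\ref{th:4.1} presupposes. Substituting~\eqref{eq:5.1} into~\eqref{eq:4.1}, the inequality that must hold for all $u\ge 0$ becomes
\begin{equation*}
\Bigl(\tfrac12\hat r^2 b^2-\hat r c_2\Bigr) u^2-\hat r(c_1+a)u+\bigl(\lambda h(\hat r)-\hat r c_0\bigr)\le 0.
\end{equation*}
Because $\hat r>0$, $a>0$, $c_1\ge 0$ and $u\ge 0$, the linear term is already nonpositive, so it suffices to arrange (i) $\hat r\le 2c_2/b^2$, killing the $u^2$-coefficient, and (ii) $\lambda h(\hat r)\le \hat r c_0$, killing the constant term.

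Under condition~1, I take $\hat r=2c_2/b^2\in(0,r_\infty)$: (i) holds with equality and (ii) is exactly the hypothesis $h(2c_2/b^2)\le 2c_0c_2/(b^2\lambda)$. Under condition~2 I must first produce the root $r_0$. Setting $\varphi(r):=c_0r/\lambda-h(r)$, one has $\varphi(0)=0$, $\varphi'(0)=c_0/\lambda-\mu>0$ by the net-profit assumption $\lambda\mu<c_0$, and $\varphi(r)\to -\infty$ as $r\uparrow r_\infty$ because $h(r)\to+\infty$. Since $h(r)=\mathbb{E}[e^{rY_i}]-1$ is convex in $r$, $\varphi$ is concave on $[0,r_\infty)$; therefore $\varphi$ has a unique positive zero $r_0$ and is nonnegative on $[0,r_0]$. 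Taking $\hat r:=\min\{r_0,2c_2/b^2\}\in(0,r_\infty)$ yields (i) at once, and $\varphi(\hat r)\ge 0$ yields (ii).

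With~\eqref{eq:4.1} verified, Theorem~\ref{th:4.1} gives that $\bigl(V_t(x,\hat r)\bigr)_{t\ge 0}=\bigl(e^{-\hat r\tilde X_t(x)}\bigr)_{t\ge 0}$ is a nonnegative $(\mathfrak{F}_t)$-supermartingale with $V_0(x,\hat r)=e^{-\hat r x}$, so $\mathbb{E}[V_t(x,\hat r)]\le e^{-\hat r x}$ for every $t\ge 0$. On $\{\tau\le t\}$ one necessarily has $\tau<t^*$ (an explosion before ruin would force $\tau=\infty$ since the surplus would remain at $+\infty$), hence $\tilde X_t=X_\tau<0$ and $V_t(x,\hat r)\ge 1$, giving
\begin{equation*}
\mathbb{P}[\tau\le t]\le \mathbb{E}\bigl[V_t(x,\hat r)\,\mathbb{I}_{\{\tau\le t\}}\bigr]\le \mathbb{E}[V_t(x,\hat r)]\le e^{-\hat r x}.
\end{equation*}
Letting $t\to\infty$ via monotone convergence on the left-hand side delivers $\psi(x)=\mathbb{P}[\tau<\infty]\le e^{-\hat r x}$. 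The main technical point that deserves care is the convexity argument in condition~2 producing $r_0$ together with the inequality $\lambda h(\hat r)\le \hat r c_0$ for $\hat r\in(0,r_0]$; after that, the verification of~\eqref{eq:4.1} is a routine quadratic-coefficient check and the passage from supermartingale to exponential tail bound is a standard Doob-type estimate.
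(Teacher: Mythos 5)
Your proof is correct and follows essentially the same route as the paper: Theorem~\ref{th:3.1} for well-posedness, verification of~\eqref{eq:4.1} for the quadratic premium, Theorem~\ref{th:4.1} for the supermartingale property, and a Markov-type estimate combined with the observation that ruin cannot occur once the surplus has exploded. Your term-by-term nonpositivity check of the quadratic in \(u\) is a mild streamlining of the paper's vertex analysis: it lets you take \(\hat r = 2c_2/b^2\) directly when \(r_0 \ge 2c_2/b^2\), whereas the paper first proves the bound for all \(\hat r < 2c_2/b^2\) and then passes to the limit.
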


\begin{proof}
Since \(c(u)\) defined by~\eqref{eq:5.1} is a locally Lipschitz continuous function
on \(\mathbb{R}\), equation~\eqref{eq:1.3} has a unique strong solution up to the time
\(\tau \wedge t^*\) by Theorem~\ref{th:3.1}. According to Theorem~\ref{eq:4.1},
if there exists \(\hat r \in (0,r_{\infty})\) such that
\begin{equation}
\label{eq:5.4}
\left( \frac{\hat r^2 b^2}{2}- \hat r c_2 \right) u^2 -\hat r (a+c_1) u -\hat r c_0 +\lambda h(\hat r) \le 0
\quad \text{for all}
\quad u\ge 0,
\end{equation}
then \(\bigl(V_t(x,\hat r)\bigr)_{t\ge 0}\) is an \((\mathfrak{F}_t)\)-supermartingale.

Condition~\eqref{eq:5.4} holds in one of the two following cases.
\begin{enumerate}

\item
The coefficient of \(u^2\) is equal to \(0\), i.e. \(\hat r= \frac{2c_2}{b^2}\).

Then~\eqref{eq:5.4} is true if and only if
\[
\frac{2c_2}{b^2}< r_{\infty}
\quad \text{and} \quad
-\frac{2c_2}{b^2} c_0+ \lambda h \left( \frac{2c_2}{b^2} \right) \le 0,
\]
which coincides with condition~1) of the theorem.

\item
The coefficient of \(u^2\) is negative, i.e. \(\hat r \in \left( 0,\frac{2c_2}{b^2} \right)\).

Since \(u= \frac{a+c_1}{\hat r b^2- 2c_2}\), which is negative, maximizes the left-hand side
of~\eqref{eq:5.4}, the last one is true if and only if
\begin{equation}
\label{eq:5.5}
\hat r \in \left( 0, \min \left\{ \frac{2c_2}{b^2}, r_{\infty} \right\} \right)
\end{equation}
and
\begin{equation}
\label{eq:5.6}
\lambda h(\hat r) \le c_0 \hat r.
\end{equation}

Consider the functions \(g_1(r)= \lambda h(r)\) and \(g_2(r)= c_0 r\) on \([0,r_{\infty})\).
Note that \(g_1(0)=0\), \(g_2(0)=0\), \(g'_1(0)= \lambda \mu\), and \(g'_2(0)=c_0\).
On account of the properties of \(h(r)\), this gives us the following.

If \(\lambda \mu \ge c_0\), then \(g_2(r)< g_1(r)\) for all \(r\in (0,r_{\infty})\).
Hence, for no \(\hat r \in (0,r_{\infty})\) does~\eqref{eq:5.4} hold.

If \(\lambda \mu < c_0\), then the equation \(g_1(r) = g_2(r)\) has a unique
solution \(r_0 \in (0,r_{\infty})\). Therefore,~\eqref{eq:5.4} has a unique positive solution
and~\eqref{eq:5.6} is true for all \(\hat r \in (0,r_0]\).
Moreover,~\eqref{eq:5.5} must be satisfied. Consequently,~\eqref{eq:5.4} holds
for all \(\hat r \in (0,r_0]\) if \(r_0< \frac{2c_2}{b^2}\),
and for all \(\hat r \in \left( 0,\frac{2c_2}{b^2} \right)\) if \(r_0 \ge \frac{2c_2}{b^2}\).
\end{enumerate}

Thus, we have found out when \(\bigl(V_t(x,\hat r)\bigr)_{t\ge 0}\) is an \((\mathfrak{F}_t)\)-supermartingale.

Next, if \(\bigl(V_t(x,\hat r)\bigr)_{t\ge 0}\) is an \((\mathfrak{F}_t)\)-supermartingale,
then for all \(t\ge 0\), we get
\begin{equation}
\label{eq:5.7}
\begin{split}
e^{-\hat r x}&=
V_0(x,\hat r)\ge
\mathbb{E} \bigl[ V_t(x,\hat r) \,/\, \mathfrak{F}_0 \bigr]=
\mathbb{E} \bigl[ e^{-\hat r X_{t \wedge \tau \wedge t^*}(x)} \bigr]\\
&=\mathbb{E} \bigl[ e^{-\hat r X_{\tau}(x)} \cdot \mathbb{I}_{\{\tau(x)< t \wedge t^*\}} \bigr]+
\mathbb{E} \bigl[ e^{-\hat r X_{t \wedge t^*}(x)} \cdot \mathbb{I}_{\{\tau(x)\ge t \wedge t^*\}} \bigr]\\
&\ge \mathbb{E} \bigl[ e^{-\hat r X_{\tau}(x)} \cdot \mathbb{I}_{\{\tau(x)< t \wedge t^*\}} \bigr],
\end{split}
\end{equation}
where \(\hat r= \frac{2c_2}{b^2}\) if condition~1) of the theorem holds, and \(\hat r\)
is an arbitrary number from \((0,r_0]\) for \(r_0< \frac{2c_2}{b^2}\)
or from \(\left( 0,\frac{2c_2}{b^2} \right)\) for \(r_0\ge \frac{2c_2}{b^2}\)
if condition~2) of the theorem holds.

Letting \(t\to\infty\) in~\eqref{eq:5.7} gives
\begin{equation}
\label{eq:5.8}
\mathbb{E} \bigl[ e^{-\hat r X_{\tau}(x)} \cdot \mathbb{I}_{\{\tau(x)< t^*\}} \bigr] \le
e^{-\hat r x}.
\end{equation}

Since the surplus becomes infinitely large at the explosion time, the ruin does not occur after \(t^*\).
Hence,
\[
\bigl\{ \omega\in\Omega \colon \tau(x,\omega)< t^*(\omega) \bigr\}=
\bigl\{ \omega\in\Omega \colon \tau(x,\omega)< \infty \bigr\}
\]
and~\eqref{eq:5.8} can be rewritten as
\begin{equation}
\label{eq:5.9}
\mathbb{E} \bigl[ e^{-\hat r X_{\tau}(x)} \cdot \mathbb{I}_{\{\tau(x)< \infty\}} \bigr] \le
e^{-\hat r x}.
\end{equation}

Furthermore,
\[
\mathbb{E} \bigl[ e^{-\hat r X_{\tau}(x)} \cdot \mathbb{I}_{\{\tau(x)< \infty\}} \bigr] =
\mathbb{E} \bigl[ e^{-\hat r X_{\tau}(x)} \,/\, \tau(x)< \infty \bigr] \cdot \mathbb{P} [\tau(x)< \infty],
\]
and
\[
\mathbb{E} \bigl[ e^{-\hat r X_{\tau}(x)} \cdot \mathbb{I}_{\{\tau(x)< \infty\}} \bigr] \ge 1
\]
by the definition of the ruin time. Therefore, from~\eqref{eq:5.9} we conclude that
\[
\mathbb{P} [\tau(x)< \infty] \le \frac{e^{-\hat r x}}
{\mathbb{E} \bigl[ e^{-\hat r X_{\tau}(x)} \,/\, \tau(x)< \infty \bigr]} \le e^{-\hat r x},
\]
which yields~\eqref{eq:5.2}.

What is left is to note that the larger \(\hat r\) we choose, the better bound in~\eqref{eq:5.2} we get.
Thus, if condition~2) of the theorem holds and \(r_0< \frac{2c_2}{b^2}\), then we set \(\hat r= r_0\).
If condition~2) of the theorem holds and \(r_0 \ge \frac{2c_2}{b^2}\), then~\eqref{eq:5.2} is true for all
\(\hat r \in \left( 0,\frac{2c_2}{b^2} \right)\); hence, it is also true for \(\hat r= \frac{2c_2}{b^2}\).
This completes the proof.
\end{proof}

\appendix
\section{Sufficient conditions for finiteness of \(I_1\) and \(I_2\)}\label{se:A}

Consider now equation~\eqref{eq:2.1}. Let \(I_1\) and \(I_2\) be defined by~\eqref{eq:2.2}.
The following lemmas provide sufficient conditions for \(I_1\) and \(I_2\) being finite.

\begin{lemma}
\label{lem:A.1}
If
\begin{equation}
\label{eq:A.1}
\lim_{v\to +\infty} \left( (1+\varepsilon) \ln v
-\frac{2}{b^2} \int_x^v \frac{p(u)}{u^2}\,du \right)< +\infty
\quad\text{for some}\quad
\varepsilon>0,
\end{equation}
then \(I_1< +\infty\).
\end{lemma}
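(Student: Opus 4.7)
The plan is to turn the hypothesis \eqref{eq:A.1} into a pointwise upper bound on the integrand of $I_1$ that is integrable at $+\infty$, and then split the range of integration into a bounded part and a tail.

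First I would observe that a function whose limit at $+\infty$ is a finite number (or $-\infty$) is necessarily bounded above on some half-line. Hence \eqref{eq:A.1} supplies constants $V_0 > x$ and $M < +\infty$ such that
\[
(1+\varepsilon)\ln v - \frac{2}{b^2}\int_x^v \frac{p(u)}{u^2}\,du \le M \qquad \text{for every } v \ge V_0.
\]
Rearranging and exponentiating yields
\[
\exp\!\left\{-\frac{2}{b^2} \int_x^v \frac{p(u)}{u^2}\,du\right\} \le e^M\, v^{-(1+\varepsilon)}, \qquad v \ge V_0.
\]

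Next, I would split $I_1 = \int_x^{V_0}(\cdot)\,dv + \int_{V_0}^{+\infty}(\cdot)\,dv$. On the compact interval $[x,V_0]$ the integrand is continuous in $v$ (because $p$ is locally Lipschitz on $\mathbb{R}$ and $x>0$ ensures that $p(u)/u^2$ is bounded on $[x,V_0]$, so the inner integral is a continuous function of $v$), so the first piece is finite. For the tail, the displayed inequality gives
\[
\int_{V_0}^{+\infty}\exp\!\left\{-\frac{2}{b^2}\int_x^v\frac{p(u)}{u^2}\,du\right\}dv
\;\le\; e^M\!\int_{V_0}^{+\infty} v^{-(1+\varepsilon)}\,dv
\;=\; \frac{e^M\, V_0^{-\varepsilon}}{\varepsilon} < +\infty,
\]
so $I_1<+\infty$.

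There is essentially no obstacle here; the lemma is a routine asymptotic-comparison argument. The only subtlety is the reading of \eqref{eq:A.1}: the condition ``$\lim\cdots<+\infty$'' should be interpreted as the limit existing in $[-\infty,+\infty)$, or equivalently as the $\limsup$ being finite, and either reading delivers the eventual upper bound on which the proof rests.
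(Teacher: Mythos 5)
Your proof is correct and follows essentially the same route as the paper's: both rest on comparing the integrand of $I_1$ with $v^{-(1+\varepsilon)}$, the paper by invoking the limit comparison test directly and you by unpacking it into an eventual pointwise bound $e^{M}v^{-(1+\varepsilon)}$ plus a split of the integral at $V_0$. Your closing remark on reading the hypothesis as a $\limsup$ condition is also consistent with how the paper uses \eqref{eq:A.1} in its examples, where the limit is in fact $-\infty$.
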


\begin{proof}
Since \(\int_x^{+\infty} \frac{1}{v^{1+\varepsilon}}\,dv <+\infty\) for all \(\varepsilon>0\),
it suffices to show that
\begin{equation}
\label{eq:A.2}
\lim_{v\to +\infty} \frac{\exp \left\{ -\frac{2}{b^2} \int_x^v \frac{p(u)}{u^2}\,du \right\}}
{\exp \left\{ -(1+\varepsilon)\ln v \right\}}< +\infty
\quad\text{for some}\quad
\varepsilon>0
\end{equation}
in order to get \(I_1< +\infty\).

We can rewrite~\eqref{eq:A.2} as
\[
\lim_{v\to +\infty} \exp \left\{ (1+\varepsilon)\ln v
-\frac{2}{b^2} \int_x^v \frac{p(u)}{u^2}\,du \right\}< +\infty
\quad\text{for some}\quad
\varepsilon>0,
\]
which gives~\eqref{eq:A.1}.
\end{proof}

\begin{lemma}
\label{lem:A.2}
If \(p(0)>0\), then \(I_2= -\infty\).
\end{lemma}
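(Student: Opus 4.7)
The plan is to extract a positive lower bound on $p$ in a neighborhood of $0$ and to show that this forces the inner integral $\int_v^x p(u)/u^2\,du$ to blow up like $1/v$ as $v \to 0^+$, making the exponential in the definition of $I_2$ non-integrable near $0$.

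First, I would invoke the continuity of $p$ at $0$ (which follows from the local Lipschitz hypothesis placed on $p$ at the start of Section~\ref{se:2}) together with $p(0)>0$ to fix some $\delta \in (0,x)$ such that
\[
p(u) \ge \frac{p(0)}{2} > 0 \quad \text{for all } u \in [0,\delta].
\]
Then for any $v \in (0,\delta)$ I would estimate
\[
\int_v^x \frac{p(u)}{u^2}\,du \;\ge\; \int_v^{\delta} \frac{p(u)}{u^2}\,du
\;\ge\; \frac{p(0)}{2}\int_v^{\delta}\frac{du}{u^2}
\;=\; \frac{p(0)}{2}\left(\frac{1}{v}-\frac{1}{\delta}\right).
\]

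Plugging this into the definition of $I_2$, I get, for all $v \in (0,\delta)$,
\[
\exp\!\left\{\frac{2}{b^2}\int_v^x \frac{p(u)}{u^2}\,du\right\}
\;\ge\; \exp\!\left\{\frac{p(0)}{b^2 v} - \frac{p(0)}{b^2 \delta}\right\}.
\]
Since $\exp\{p(0)/(b^2 v)\}$ grows faster than any power of $1/v$ as $v \downarrow 0$, the integral $\int_0^{\delta} \exp\{p(0)/(b^2 v)\}\,dv$ diverges, and hence so does $\int_0^{x}\exp\bigl\{(2/b^2)\int_v^x p(u)/u^2\,du\bigr\}\,dv$. By the definition of $I_2$ with its leading minus sign, this gives $I_2 = -\infty$.

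There is really no significant obstacle here: the argument is routine once one pins down the local lower bound on $p$. The only subtlety worth mentioning in the write-up is to make explicit that $p$ is continuous at $0$ (so that $p(u) \ge p(0)/2$ holds on some $[0,\delta]$); this is immediate from the locally Lipschitz hypothesis and the assumed monotonicity of $p$ on $\mathbb{R}_+$.
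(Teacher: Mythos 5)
Your proof is correct and follows essentially the same route as the paper: bound the inner integral from below by a constant multiple of $\int_v^{\cdot} u^{-2}\,du$, so that the integrand behaves like $\exp\{C/v\}$ near $v=0$ and is non-integrable there. The only (harmless) difference is that you invoke continuity to get $p(u)\ge p(0)/2$ on a small interval, whereas the monotonicity of $p$ on $\mathbb{R}_+$ already gives $p(u)\ge p(0)$ on all of $[v,x]$, which is what the paper uses directly.
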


\begin{proof}
It is easily seen that
\begin{equation*}
\begin{split}
-I_2 &\ge \int_0^x \exp\left\{ \frac{2p(0)}{b^2} \int_v^x \frac{1}{u^2} \,du \right\} \,dv
=\exp \left\{ -\frac{2p(0)}{b^2 x} \right\} \cdot
\int_0^x \exp \left\{ \frac{2p(0)}{b^2 v} \right\} \,dv \\
&=\exp \left\{ -\frac{2p(0)}{b^2 x} \right\} \cdot
\int_{1/x}^{+\infty} \frac{1}{u^2} \exp \left\{ \frac{2p(0)u}{b^2} \right\} du =+\infty,
\end{split}
\end{equation*}
which proves the lemma.
\end{proof}

\end{document}